\numberwithin{equation}{section}
\newtheorem{theorem}{Theorem}[section]
\newtheorem{lemma}[theorem]{Lemma}
\newcommand{\cy}[1]{\mbox{$\langle #1 \rangle $}}
\newenvironment{proof}{\underline{Proof:}}{ \hfill $\Box$ % \rule{2.5mm}{2.5mm}
                      \vspace{\baselineskip}}
\begin{document}

\pagestyle{myheadings}
%\markright{Class-preserving automorphisms}

\title{Finite groups with small number of cyclic subgroups}

\author{Wei Zhou}

\date{}
\maketitle

\begin{center}
	School of Mathematics and Statistics,\\
	Southwest University, Chongqing 400715, P. R. CHINA\\
	zh\_great@swu.edu.cn	
\end{center}

\footnotetext{Support by
	National Natural Science Foundation of China (Grant No. 11471266). \\
	{\em AMS Subject Classification}: 20D15, 20D25
	\newline
	{\em Key words and phrases}:
	finite groups, cyclic subgroups, $2$-groups}

\renewcommand{\arraystretch}{0.1}

\begin{abstract}
	In this note, we study the finite groups with the number of cylic subgroups no greater than 5.
\end{abstract}

\baselineskip .65 true cm

\section{Introduction}
For a finite group $G$, let $C(G)$ be the poset of cyclic subgroups of $G$. Sometimes $C(G)$ can decide the structure of $G$. For example, $|C(G)|=|G|$ if and only if $G$ is an elementary abelian $2$-group. 
The groups $G$ such that $|G|-|C(G)|\le 3$ is classified in \cite{Tar1, Tar2} and \cite{Zhou}.

It is well-known that a finite $p$-group $G$ has eactly one cyclic subgroup of order $p$ if and only if $G$ is cyclic or generalized quaternion group. Hence the groups with small number of cyclic subgroups will be interesting. 
In this note, we shall study the finite group $G$ with $|C(G)|\le 5$. 

It is esay to see that $|C(G)|=1$ if and only $G=1$, and $|C(G)|=2$ if and only if $G\cong C_p$ for some prime $p$.
In this note, we will focus on the group $G$ such that $3\le |C(G)|\le 5$.

For a finite group $G$, denote by $\pi_e(G)$ the set of all element orders of $G$, and by $\pi(G)$ the set of all prime divisors of $|G|$. For any $i \in \pi_e(G)$, denote by $C_i(G)$ the set of all cylic subgroups of order $i$ in $G$. Throughout this note, let $c_i=|C_i(G)|$. 

\section{The main result}

For a finite group $G$, we know 
	\begin{equation} \label{eq:1}
		\begin{split}
			&|G|=\sum_{k\in \pi_e(G)} c_k\cdot\phi (k), \\
			&|C(G)|=\sum_{k \in \pi_e(G)} c_k,
		\end{split}
	\end{equation}
	where $\phi$ is the Eucler function.

\begin{lemma} \label{3C}
	If $|C(G)|=3$, then $G\cong C_{p^2}$ for some prime $p$.
\end{lemma}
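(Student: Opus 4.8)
The plan is to count through the two nontrivial cyclic subgroups. Since the trivial subgroup is cyclic of order $1$, we always have $c_1=1$, so the hypothesis $|C(G)|=3$ is equivalent to saying that $G$ has exactly two nontrivial cyclic subgroups. First I would invoke Cauchy's theorem: every prime $p\in\pi(G)$ forces a cyclic subgroup of order $p$, and subgroups of distinct prime orders are distinct, so $c_p\ge 1$ for each $p\in\pi(G)$ and therefore $|\pi(G)|\le 2$.

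Next I would rule out the case $|\pi(G)|=2$. If $\pi(G)=\{p,q\}$, then $c_p\ge 1$ and $c_q\ge 1$ already exhaust both nontrivial cyclic subgroups, forcing $c_p=c_q=1$ and $c_k=0$ for every other $k$. In particular $G$ has a unique subgroup of order $p$, so its Sylow $p$-subgroup is cyclic or generalized quaternion; but either type of order at least $p^2$ contains a cyclic subgroup of order $p^2$ (or of order $4$ in the quaternion case), which is excluded, so the Sylow $p$-subgroup has order $p$, and likewise for $q$. Thus $|G|=pq$ with a unique, hence normal, subgroup of each prime order, so $G\cong C_p\times C_q\cong C_{pq}$; but then $G$ itself is a third nontrivial cyclic subgroup, a contradiction. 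Hence $G$ is a $p$-group.

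Finally, with $G$ a $p$-group I would split on the value of $c_p$. Since $c_p\ge 1$ and there are only two nontrivial cyclic subgroups, $c_p\in\{1,2\}$. If $c_p=2$, then both nontrivial cyclic subgroups have order $p$, so $G$ has exponent $p$; counting elements of order $p$ gives $|G|-1=2(p-1)$, i.e. $|G|=2p-1$, which cannot be a power of $p$, a contradiction. If $c_p=1$, then $G$ is cyclic or generalized quaternion by the fact quoted in the introduction. A direct count shows that $Q_8$ already has five cyclic subgroups, and larger generalized quaternion groups have more, so $G$ must be cyclic, say $C_{p^n}$; its cyclic subgroups correspond to the divisors $1,p,\ldots,p^n$, whence $|C(G)|=n+1=3$ forces $n=2$ and $G\cong C_{p^2}$.

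The main obstacle I anticipate is the two-prime case: I must check carefully that the constraints $c_p=c_q=1$ genuinely collapse $G$ to order $pq$, and then notice that the cyclicity of $C_{pq}$ manufactures the forbidden extra subgroup. The remaining $p$-group analysis is essentially bookkeeping, the only nonroutine inputs being the classification of $p$-groups with a unique subgroup of order $p$ and the explicit small count for $Q_8$.
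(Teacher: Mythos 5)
Your proof is correct, and its skeleton is the same as the paper's: Cauchy's theorem bounds $|\pi(G)|\le 2$, the two-prime case dies because it forces a cyclic group of order $pq$ (which has four cyclic subgroups), and the $p$-group case splits on $c_p\in\{1,2\}$. The differences are in how you execute the sub-steps, and they are worth noting. In the two-prime case the paper is leaner: from $c_p=c_q=1$ it takes the two unique (hence normal) subgroups $A\cong C_p$, $B\cong C_q$ and observes that $AB\cong C_{pq}$ is already a subgroup of $G$ with $|C(AB)|=4>3$, with no need to determine $|G|$; you instead invoke the classification of groups with a unique subgroup of order $p$ (cyclic or generalized quaternion Sylow subgroups) to force $|G|=pq$ before reaching the same contradiction, which is valid but uses heavier machinery than necessary. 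Similarly, in the $p$-group case with $c_p=1$ the paper just notes the third cyclic subgroup must have order exactly $p^2$ (a larger one would contain too many cyclic subgroups) and reads off $|G|=1+(p-1)+(p^2-p)=p^2$ from the counting identity, whereas you again route through the cyclic/generalized quaternion classification plus the explicit count $|C(Q_8)|=5$; both work. Finally, in the $c_p=2$ case your count $|G|=1+2(p-1)=2p-1$ is the correct one --- the paper writes $p^n=2p+1$, which is a slip, though the contradiction stands either way.
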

\begin{proof}
	We claim that $|\pi(G)|=1$. Otherwise, by Cauchy theorem, $|\pi(G)|=2$. Let $|G|=p^aq^b$, where $a, b \ge 1$. By equation \ref{eq:1}, $c_p=c_q=1$. Hence there exist $A \lhd G$ and $B\lhd G$ such that $A \cong C_p$ and $B\cong C_q$. Thus $AB\cong C_{pq}$. Note that $|C(C_{pq})|=4$, a contradiction.
	
	Hence $G$ is a $p$-groups. Let $|G|=p^n$. By equation \ref{eq:1}, $c_p=1$ or $2$. If $c_p=2$, we get that $p^n=2p+1$, a contradiction. Hence $c_p=1$. Then $c_{p^2}=1$. From equation \ref{eq:1}, $n=2$ and $G \cong C_{p^2}$. 
\end{proof}

\begin{lemma} \label{4C}
	If $|C(G)|=4$, then $G \cong C_{pq}, C_{p^3}$.
\end{lemma}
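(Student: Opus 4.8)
The plan is to mirror the proof of Lemma \ref{3C}: first bound $|\pi(G)|$, then treat the resulting cases with the two identities in \eqref{eq:1}. Since the trivial subgroup always contributes $c_1=1$, the hypothesis reads $c_p+c_{p^2}+\cdots=3$ when $G$ is a $p$-group, and $c_p+c_q+\cdots=3$ in the two-prime case. I would begin by showing $|\pi(G)|\le 2$: if three distinct primes $p,q,r$ divided $|G|$, then by Cauchy's theorem $c_p,c_q,c_r\ge 1$, so together with $c_1=1$ we already have $|C(G)|\ge 4$, and equality forces each of these subgroups to be unique and hence characteristic, so their product is a cyclic $C_{pqr}\le G$ with $|C(C_{pqr})|=8$, a contradiction.

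Next I would dispose of the case $|\pi(G)|=2$, say $|G|=p^aq^b$. By Cauchy $c_p,c_q\ge1$, so $\{1\},C_p,C_q$ use up three of the four cyclic subgroups and at most one more is permitted; in particular $c_p,c_q\ge2$ is impossible as it would give $|C(G)|\ge5$. If $c_p=c_q=1$ the unique subgroups $C_p\lhd G$ and $C_q\lhd G$ generate a cyclic $C_{pq}\le G$ with $|C(C_{pq})|=4$, and since $C(C_{pq})\subseteq C(G)$ with both sides of size $4$ they coincide, so any $g\in G\setminus C_{pq}$ would produce a new cyclic subgroup; hence $G=C_{pq}$. The remaining possibility, $c_p=2$ (and symmetrically $c_q=2$), I would rule out by noting that then every element has order in $\{1,p,q\}$, a Sylow/exponent argument forces $|G|=pq$, and the Sylow count $n_p\in\{1,q\}$ is incompatible with having exactly two subgroups of order $p$.

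The $p$-group case $|G|=p^n$ is where the real work lies and is the main obstacle, since here one must squeeze the structure out of $c_p+c_{p^2}+\cdots=3$ using arithmetic constraints rather than normal subgroups. If $c_p=1$, the quoted dichotomy makes $G$ cyclic or generalized quaternion: the cyclic case gives $n+1=4$, i.e. $G\cong C_{p^3}$, and $Q_{2^n}$ is excluded because already $|C(Q_8)|=5$. If $c_p=2$, then exactly one higher cyclic subgroup is allowed, and since every $C_{p^k}$ with $k\ge2$ contains a $C_{p^2}$, the only option is $c_{p^2}=1$; then \eqref{eq:1} gives $|G|=p^2+p-1\equiv-1\pmod p$, which is never a power of $p$, a contradiction.

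The delicate point is $c_p=3$: here $G$ has exponent $p$ and $|G|=3p-2\equiv-2\pmod p$, which forces $p=2$ and $|G|=4$, so $G\cong C_2\times C_2$. This elementary abelian group genuinely satisfies $|C(G)|=4$, so I expect the complete classification to read $G\cong C_{pq}$, $C_{p^3}$, or $C_2\times C_2$; the hardest part of the write-up is precisely carrying out these residue-modulo-$p$ and Sylow arguments carefully enough to certify that no other group survives in each subcase.
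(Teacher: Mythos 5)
Your proposal is correct, and it follows essentially the same route as the paper: bound $|\pi(G)|$ via Cauchy's theorem and the counting identities in \eqref{eq:1}, settle the two-prime case using the uniqueness (hence normality) of $C_p$ and $C_q$ and the fact that $|C(C_{pq})|=4$, and split the $p$-group case according to $c_p\in\{1,2,3\}$. Your local variations are cosmetic rather than structural: in the two-prime subcase $c_p=2$ you use a Sylow count ($n_p=2$ contradicts $n_p\equiv 1 \pmod p$) where the paper uses the inequality $p^aq^b-2p-q+2\ge (p-1)(q-2)$, and in the $p$-group subcase $c_p=2$ you argue modulo $p$ where the paper uses an inequality; both versions are sound.

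The one place where you genuinely diverge from the paper is the subcase $c_p=3$, and there you are right and the paper is wrong. The paper's proof reaches exactly the same arithmetic conclusion as yours, $p^n=1+3(p-1)$, hence $p=2$ and $n=2$, but then dismisses it with the unsupported sentence ``This is impossible.'' It is not impossible: a group of order $4$ and exponent $2$ is $C_2\times C_2$, whose cyclic subgroups are the trivial subgroup and the three subgroups of order $2$, so $|C(C_2\times C_2)|=4$; and $C_2\times C_2$ is isomorphic to neither $C_{pq}$ nor $C_{p^3}$, since both of those are cyclic. So the lemma as stated is false, your amended classification $G\cong C_{pq}$, $C_{p^3}$ or $C_2\times C_2$ is the correct one, and the omission propagates to the paper's main theorem, which likewise fails to list $C_2\times C_2$ among the groups with $|C(G)|\le 5$.
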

\begin{proof}
	From equation \ref{eq:1}, we see that $|\pi(G)|\le 3$. If  $\pi(G)=\{p, q, r\}$, then $c_p=c_q=c_r=1$ and there exist
	$A, B, C \lhd G$ such that $A\cong C_p$, $B\cong C_q$ and $C\cong C_r$. Thus $ABC\cong C_{pqr}$. Note that $|C(C_{pqr})|>4$, a contradiction. Now we get the following two cases:
	
	Case 1. $\pi(G)=\{p, q\}$. Let $|G|=p^aq^b$. From equation \ref{eq:1}, we have that $c_p=2$ and $ c_q=1$ or $c_p=1=c_q$.
	If $c_p=2$ and $ c_q=1$, then $p^aq^b=2p+q-2$. Since $p^aq^b-2p-q+2\ge pq-2p-q+2=(p-1)(q-2)$, we get $q=2$ and $a=b=1$. Since $c_2=1$, we get $G\cong C_2\times C_q$, contrary to $c_p=2$. Therefore $c_p=c_q=1$, and we can find $A, B\lhd G$ such that $A\cong C_p$ and $B \cong C_q$. Note that $AB\cong C_{pq}$ and $|C(AB)|=4$. It follows that $G=AB\cong C_{pq}$.
	
	Case 2. $\pi(G)=\{p\}$. Let $|G|=p^n$. We know that $c_p\le 3$. If  $c_p=3$, then $p^n=1+3(p-1)$. Hence $p=2$ and $n=2$. This is impossible. If $c_p=2$, then $c_{q^2}=1$, and $p^n=p^2+p-1$. Note that $p^n-p^2-p+1\ge p^3-p^2-p+1=(p-1)(p^2-1)>0$. This is impossible. So we get that $c_p=1$. By \cite[Satz 3.8.2]{Hup}, $G$ is a cyclic or $p=2$ and $G$ is a generalized quaternion group. Clearly,  we get $G \cong C_{p^3}$ in this case.	
\end{proof}

\begin{lemma}\label{5C}
	If $|C(G)|=5$, then $G\cong S_3, C_{p^4}, C_3 \times C_3, Q_8$.
\end{lemma}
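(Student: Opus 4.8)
The plan is to read off all numerical constraints from the identity (\ref{eq:1}) with $|C(G)|=5$, exactly as in Lemmas \ref{3C} and \ref{4C}, so that the four non-trivial cyclic subgroups obey $\sum_{k>1}c_k=4$. First I would bound $|\pi(G)|$: each prime $r\mid|G|$ gives $c_r\ge1$ by Cauchy, so $|\pi(G)|\le4$. If $|\pi(G)|\in\{3,4\}$, then every prime with $c_r=1$ has a unique, hence normal, subgroup of order $r$, and the product of two (resp. all) of these commuting coprime normal subgroups is cyclic and already carries too many cyclic subgroups: four primes force a $C_{p_1p_2p_3p_4}$ with $16$ cyclic subgroups, while three primes force either $C_{p_1p_2p_3}$ (eight cyclic subgroups) or, when one $c_r=2$, a new element whose order is the product of the two remaining primes, pushing $\sum_{k>1}c_k$ past $4$. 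Hence $|\pi(G)|\le2$.

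For $|\pi(G)|=1$, write $|G|=p^n$. Reducing (\ref{eq:1}) modulo $p$ with $\phi(p^j)\equiv0\ (j\ge2)$ and $\phi(p)\equiv-1$ gives $c_p\equiv1\pmod p$, so $c_p\in\{1,p+1\}$ under the budget $c_p\le4$. Then $c_p=p+1=4$ forces $p=3$, exhausts the budget, yields $|G|=9$, and gives $G\cong C_3\times C_3$; $c_p=p+1=3$ forces $p=2$ together with one further cyclic $2$-subgroup, which must have order $4$ (order $\ge8$ would contribute a nested $C_4$), giving the impossible $|G|=6$. If $c_p=1$ then by \cite[Satz 3.8.2]{Hup}, as in Lemma \ref{4C}, $G$ is cyclic or generalized quaternion: $C_{p^n}$ has $n+1$ cyclic subgroups, so $n=4$ and $G\cong C_{p^4}$, whereas a direct count shows $Q_{2^m}$ has $m+2^{m-2}$ cyclic subgroups, which equals $5$ only for $m=3$, giving $G\cong Q_8$.

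The case $\pi(G)=\{p,q\}$ is the main work. The first step is to show $G$ has no element of order divisible by $pq$: otherwise a cyclic $C_{pq}\le G$ already supplies the non-trivial subgroups of orders $p,q,pq$, leaving room for just one more, and chasing the Sylow subgroups forces $G\cong C_{p^2q}$ or $C_{pq^2}$ (six cyclic subgroups), a contradiction. Consequently every cyclic subgroup has prime-power order; writing $n_p,n_q$ for the numbers of non-trivial cyclic $p$- and $q$-subgroups gives $n_p+n_q=4$ with $n_p,n_q\ge1$. For each admissible pair $(n_p,n_q)\in\{(1,3),(2,2),(3,1)\}$ I would list the finitely many order-distributions (for example $n_r=3$ comes from $3\,C_r$, from $\{C_r,C_r,C_{r^2}\}$, from $\{C_r,C_{r^2},C_{r^2}\}$, or from a chain $C_r\subset C_{r^2}\subset C_{r^3}$), substitute the corresponding contribution into (\ref{eq:1}), and close all but one branch by the factoring-and-size estimates of Lemma \ref{4C} (e.g. $(2,2)$ collapses via $(p-2)(q-2)=1$ and $(p^2-2)(q-1)=0$). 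The surviving branch is $(n_p,n_q)=(1,3)$ with $|G|=pq$, $c_p=1$, $c_q=3$, where (\ref{eq:1}) factors as $(p-3)(q-1)=0$, so $p=3$; the Sylow congruence forces the number $c_q=3$ of Sylow $q$-subgroups to satisfy $3\equiv1\pmod q$, whence $q=2$ and $G$ is the unique non-abelian group of order $6$, namely $S_3$.

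I expect the genuine obstacle to lie entirely in the two-prime case: specifically in (i) ruling out mixed-order elements cleanly, and (ii) organizing the order-distribution bookkeeping for $(n_p,n_q)$ so that each branch is closed either by an elementary factorization of (\ref{eq:1}) or by the Sylow counting congruence. By contrast the single-prime case is routine once the classification of $p$-groups with a unique subgroup of order $p$ is invoked.
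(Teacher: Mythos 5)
Your proposal is correct, and it reaches the classification by a genuinely different route from the paper in both main cases. For the one-prime case, the paper splits into $p\ge 5$ (every cyclic subgroup is normal, so $G$ is Dedekind, hence cyclic), $p=3$ (no proper $C_3\times C_3$, then \cite[Satz 3.8.4]{Hup}), and $p=2$ (an exponent-by-exponent count of elements outside a maximal cyclic subgroup); your congruence $c_p\equiv 1\pmod p$, read off directly from the counting identity, replaces all of this with the uniform dichotomy $c_p\in\{1,p+1\}$, after which \cite[Satz 3.8.2]{Hup} and the count $|C(Q_{2^m})|=m+2^{m-2}$ finish immediately --- this is shorter and cleaner than the paper's treatment, and it sidesteps the paper's somewhat delicate $s=2,3$ analysis for $2$-groups. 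For the two-prime case the paper's key move is structural rather than arithmetic: since a group is never the union of two proper subgroups, not both Sylow subgroups can carry all five cyclic subgroups, which forces one Sylow subgroup to be $C_p$; the rest is normalizer-index and Sylow-counting with a split on $b=1$ versus $b>1$. You instead first exclude elements of order $pq$ and then run a finite bookkeeping over the distributions $(n_p,n_q)$, closing branches by factorizations such as $(p-2)(q-2)=1$, $(p^2-2)(q-1)=0$, $(p-3)(q-1)=0$ together with the Sylow congruence; I checked that each of these branches does close as you claim. Your version is more computational and has more cases, but it is systematic and would scale to $|C(G)|=6,7,\dots$ more readily than the paper's union trick. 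One small point to patch when writing it up: in the surviving branch $(n_p,n_q)=(1,3)$ with $q=2$, the three involutions could a priori sit inside a Sylow subgroup isomorphic to $C_2\times C_2$ rather than being three conjugate Sylow subgroups of order $2$, so before asserting $|G|=pq$ and invoking the congruence $3\equiv 1 \pmod q$ you must kill that configuration; the same size estimate does it, since $|G|=4p$ would force $4p=1+(p-1)+3$, which is impossible.
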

\begin{proof}
	Let $\pi(G)=\{p_1, \cdots, p_t\}$. By Cauchy theorem, $t\le 4$. If $t=4$, then $c_{p_i}=1$ for $i=1, \cdots, 4$. Thus $G$ has a normal cyclic subgroup $N \cong C_{p_1p_2p_3p_4}$. We get a contradiction for $|C(C_{p_1p_2p_3p_4})|>5$. If $t=3$, from equation \ref{eq:1}, we get that $c_{p_1}=1$ or $2$, and $c_{p_2}=c_{p_3}=1$. Let $V, W \lhd G$ such that $V\cong C_{p_2}$ and $W\cong C_{p_3}$. Thus $|C(VW)|=4$, which implies that $c_{p_1}=1$. Let $U\lhd G$ such that $U\cong C_{p_1}$. Note that $UVW\cong C_{p_1p_2p_3}$ and $|C(C_{p_1p_2p_3})|>5$. This is impossible. So we get the following two cases:

	Case 1. $t=2$. Let $|G|=p^aq^b$. Let $P \in Syl_p(G)$ and $Q \in Syl_q(G)$. Suppose that neither $P$ nor $Q$ are cylcic subgroups of prime order. Then $|C(P)|\ge 3$ and $|C(Q)|\ge 3$. Thus $P \cup Q$ contains all the $5$ cyclic subgroups of $G$, and $G=P \cup Q$. This is impossible. So we can assume that $P \cong C_p$, and $|G|=pq^b$. 
	
	If $b=1$, by symmetry, we can assume $P \lhd G$. Note $|C(C_{pq})|=4$. We get that $Q$ is not normal in $G$. So $c_q=|G:N_G(Q)|=p$. Then $p=3$, and $q=2$. We get $G \cong S_3$.
	
	Now assume $b > 1$. 
	If $P$ is not normal in $G$, then $c_p=|G:N_G(P)|\ge 1+p$. We get $p=2$ and $c_p=3$. This imples $b=1$, a contradiction. Hence $P \lhd G$ and $c_p=1$. Clearly, $|C(Q)|\le 3$. By lemma \ref{3C}, $Q\cong C_{q^2}$.
	Now we get $c_{q^2}=|G:N_G(Q)|\ge 3$, contrary to $|C(G)|=5$.

	Case 2. $t=1$. Then $|G|=p^a$. If $p\ge 5$, then each cyclic subgroup of $G$ must be normal in $G$ for $|C(G)|=5$, which implies $G$ is a Dedekind group. Thus $G$ is abelian $p$-groups. Note  that $|C_p(C_p\times C_p)|=p+1>5$. It follows that $G$ is cyclic, and then $G \cong C_{p^4}$. 
	
	Next assume $p=3$. 
	Note $|C(C_3 \times C_3)|=5$. Thus $G$ can not have a proper subgroup isomorphic to $ C_3 \times C_3$. If $G \not \cong C_3\times C_3$, then all the subgroup of order $9$ are cyclic, which implies $G$ is cyclic by \cite[Satz 3.8.4]{Hup}. So we get $G \cong C_3\times C_3$ or $ C_{3^4}$.
	
	Finally, we consider the case that $p=2$. Let $|G|=2^n$. Since $|C(G)|=5$, we get $exp(G)=2^s\le 2^4$. 	
	If $s=1$, then $G$ is an elementary abelian $2$-group of order $2^n$ and $|C(G)|=2^n$, a contradicton. If $s =3$,  let $x\in G$ such that $|x|=2^3$. Since $|C(\cy{x})|=4$,
	there is only one cyclic subgroup in $G-\cy{x}$. Hence all the elements in $G-\cy{x}$ are contained in a cyclic subgroup and have the same order, $2^r$ say. So we get $|G-\cy{x}|=\phi(2^r)$, and 
	$2^n-8=2^{r-1}$, where $r \le 3$. This is impossible.
	
	Clearly, if $s=4$, then $G\cong C_{2^4}$. We need only to consider the case that $s=2$. Let $x \in G$ with $|x|=4$. 
	Hence we can get exactly two nontrivial cyclic subgroups from $G-\cy{x}$. Let $2^r$ and $2^2$ be the order of the two cyclic subgroups, where $r,s \le 2$. Then 
	$2^n-4=2^{r-1}+2^{s-1}$.
	So we get $n=3$ and $r,s=2$.
	Thus $G =Q_8$.
\end{proof}

Combining lemma \ref{3C}, \ref{4C} and \ref{5C}, we get the following.
\begin{theorem}
	For a finite group $G$, $|C(G)|\le 5$ if and only $G$ is a subgroup of $C_{p^4}$ or $G \cong S_3, Q_8, C_3\times C_3$, or $ C_{pq}$, where $p$ and $q$ are two different primes.
\end{theorem}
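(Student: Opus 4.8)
The plan is to assemble the theorem from the three lemmas together with the two trivial cases recorded in the introduction, and then dispatch the converse by a direct count. Since the genuine structural work is already done in Lemmas \ref{3C}, \ref{4C} and \ref{5C}, the proof is essentially a bookkeeping exercise that organizes their conclusions into the single uniform statement in the theorem.

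For the forward (``only if'') direction, suppose $|C(G)|\le 5$. As $G$ always contains the trivial subgroup, $|C(G)|\ge 1$, so I would split into the five cases $|C(G)|=1,2,3,4,5$. The cases $|C(G)|=1$ and $|C(G)|=2$ give $G=1$ and $G\cong C_p$ by the remarks in the introduction, while Lemmas \ref{3C}, \ref{4C} and \ref{5C} handle $|C(G)|=3,4,5$ respectively. Collecting the outcomes, the cyclic $p$-groups that occur are exactly $1, C_p, C_{p^2}, C_{p^3}, C_{p^4}$, and the remaining possibilities are $C_{pq}$, $S_3$, $Q_8$ and $C_3\times C_3$. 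The one observation needed here is that $1, C_p, C_{p^2}, C_{p^3}, C_{p^4}$ are, up to isomorphism, precisely the subgroups of $C_{p^4}$, since a cyclic group of order $p^k$ has a unique subgroup of each order dividing $p^k$; this lets me package all the cyclic $p$-group cases into the single clause ``$G$ is a subgroup of $C_{p^4}$.''

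For the converse (``if'' direction), I would verify that each listed group has at most $5$ cyclic subgroups by direct enumeration, where equation \ref{eq:1} or a bare count suffices. A subgroup of $C_{p^4}$ is cyclic of order $p^k$ with $k\le 4$, and $|C(C_{p^k})|=k+1\le 5$. For the remaining groups one counts: $|C(S_3)|=5$ (the trivial subgroup, three subgroups of order $2$, and one of order $3$), $|C(Q_8)|=5$ (the trivial subgroup, $\cy{-1}$, and the three cyclic subgroups of order $4$), $|C(C_3\times C_3)|=5$ (the trivial subgroup together with the four subgroups of order $3$), and $|C(C_{pq})|=4$. Hence every group in the list satisfies $|C(G)|\le 5$.

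There is essentially no genuine obstacle, since the three lemmas carry all the hard work; the only points requiring care are the merging of the cyclic $p$-group cases into ``subgroup of $C_{p^4}$'' and the routine verification of the converse counts. I would be mildly careful to note explicitly that this clause also covers the trivial group, realized as the trivial subgroup of $C_{p^4}$, so that the stated equivalence is correct for every $G$ with $|C(G)|\le 5$, including the case $|C(G)|=1$.
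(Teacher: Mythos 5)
Your proposal is correct and matches the paper's approach: the paper proves this theorem simply by combining Lemmas \ref{3C}, \ref{4C} and \ref{5C} with the trivial cases $|C(G)|=1,2$ from the introduction, exactly as you do. Your explicit packaging of $1, C_p, C_{p^2}, C_{p^3}, C_{p^4}$ as the subgroups of $C_{p^4}$ and your verification of the converse counts are routine details the paper leaves implicit, and all of your counts are accurate.
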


\end{document}